\documentclass[a4paper]{amsproc}
\usepackage{amssymb}
\usepackage[hyphens]{url} \urlstyle{same}
\usepackage{amsmath}
\usepackage{cite}
\usepackage{mathrsfs}
\usepackage[title]{appendix} 
\usepackage[utf8]{inputenc}
\usepackage[english]{babel}
\usepackage{hyperref}
\usepackage{amsthm}
\usepackage{tikz-cd}
\usepackage{caption}
\usepackage{float}

\usepackage{extarrows}
\usepackage{graphicx}
\usepackage{enumitem}
\usepackage{comment}
\usepackage{wasysym}
\hypersetup{linktoc=page}

\theoremstyle{plain}
\newtheorem{Theorem}{Theorem}

\newtheorem{Proposition}[Theorem]{Proposition}

\newtheorem{Corollary}[Theorem]{Corollary}

\theoremstyle{remark}
\newtheorem*{Remark}{Remark}

 \numberwithin{Theorem}{section}

\setlength{\textwidth}{28cc} \setlength{\textheight}{42cc}

\title{Self-similarity of $p$-adic groups}

\author[Amir Y. Weiss Behar and Devora Zalaznik]{\bfseries Amir Y. Weiss Behar and Devora Zalaznik}

\address{
Einstein Institute of Mathematics \\
Edmond J. Safra Campus (Givat-Ram)\\
The Hebrew University of Jerusalem\\
9190401\\
Israel}

\email{amir.behar@mail.huji.ac.il}
\email{devora.zalaznik@mail.huji.ac.il}

\vspace{18mm} \setcounter{page}{1} \thispagestyle{empty}

\begin{document}

\today

\begin{abstract}
We show that a compact open subgroup $H$ of a simple algebraic $p$-adic group $G$ is self-similar if and only if it is isotropic. 
\end{abstract}

\maketitle

\section{Introduction}

Let $X$ be a finite alphabet and $X^{\ast}$ the set of all finite words on $X$. The set $X^{\ast}$ has a natural structure of a rooted tree once declaring that $v,w \in X^{*}$ are adjacent if either $v=wx$ or $w=vx$ for some $x\in X$. \newline
A group G is called  \textit{self-similar} if it acts faithfully on such a tree $X^{\ast}$ satisfying: $(i)$ the action is transitive on $X$; and $(ii)$ for every $g\in G$ and every $x\in X$ there exists $h\in G$ and $y\in X$ such that $g(xw)=yh(w)$ for every word $w\in X^{*}$ (see \S\ref{pre}). \newline

Self-similar groups form a rich and interesting class of groups that got a considerable amount of attention (cf. \cite{Gr80}, \cite{GS83}, \cite{Ne05}  and the references therein). \newline

In \cite{NS22} and \cite{NS23} Noseda and Snopce initiated the study of self similarity of compact $p$-adic analytic groups. In particular, in \cite{NS23} they proved that if $D$ is a finite dimensional noncommutative central division $\mathbb{Q}_{p}$-algebra, and $H$ an open subgroup of $SL(1,D)$, then $H$ is not self-similar. They conjectured that the same assertion holds even if $D$ is central over some $p$-adic field $k$, $[k:\mathbb{Q}_{p}]<\infty$ (conjecture D there). We prove that conjecture, moreover we prove also the converse.

\begin{Theorem}[Main theorem]\label{Mai~The}
    Let $k$ be a $p$-adic field (i.e. a finite extension of $\mathbb{Q}_{p}$), $\mathbf{G}$ a simply connected, absolutely almost simple linear algebraic $k$ group and $H \subseteq \mathbf{G}(k)$ a compact open subgroup. Then $H$ is  self-similar if and only if $\mathbf{G}$ is $k$-isotropic (i.e. $\text{rank}_{k}\mathbf{G} \geq 1$).
\end{Theorem}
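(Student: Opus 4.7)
The proof splits naturally into its two implications, both verified via the profinite form of Nekrashevych's virtual-endomorphism criterion (cf.~\cite{NS22}): a compact $p$-adic analytic pro-$p$ group $H$ is self-similar if and only if there exist an open subgroup $K \leq_o H$ of finite index $\geq 2$ and a continuous homomorphism $\phi : K \to H$ whose $\phi$-core---the largest closed $N \leq K$ which is normal in $H$ and satisfies $\phi(N) \subseteq N$---is trivial. Since any two compact open subgroups of $\mathbf{G}(k)$ are commensurable and self-similarity is commensurability-invariant for pro-$p$ analytic groups, the dichotomy reduces to verifying each direction for one convenient $H$.

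\noindent\emph{Isotropic $\Rightarrow$ self-similar.} The hypothesis $\mathrm{rank}_k \mathbf{G} \geq 1$ provides a non-trivial $k$-cocharacter $\lambda : \mathbb{G}_m \to \mathbf{G}$; set $t = \lambda(\pi)$ for a uniformizer $\pi$ of $k$. Conjugation by $t$ acts on each root subgroup $U_\alpha$ of the split torus $\lambda(\mathbb{G}_m)$ by multiplication by $\pi^{\langle \alpha, \lambda \rangle}$, strictly contracting the unipotent radical $U$ of the parabolic $P(\lambda)$ and strictly expanding $U^{-}$. Take $H$ to be a deep principal congruence subgroup of $\mathbf{G}(\mathcal{O}_k)$, $K = H \cap t^{-1}Ht$, and $\phi : K \to H$ defined by $\phi(x) = txt^{-1}$. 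A closed $N \leq K$ normal in $H$ with $\phi(N) \subseteq N$ satisfies $t^n N t^{-n} \subseteq H$ for every $n \geq 0$, which forces elements of $N$ to have trivial components along the $t$-expanded (i.e.\ $U^{-}$) directions; normality of $N$ under $H$, combined with Zariski density of $H$ in $\mathbf{G}$ and simplicity of $\mathbf{G}$, then force $N = 1$.

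\noindent\emph{Anisotropic $\Rightarrow$ not self-similar.} The Kneser classification of simply connected, absolutely almost simple $k$-anisotropic groups over a $p$-adic field $k$ gives $\mathbf{G}(k) \cong \mathrm{SL}_1(D)$ for some finite-dimensional central division algebra $D$ over $k$; in particular $\mathbf{G}(k)$ is compact and the assertion reduces to conjecture~D of \cite{NS23}. Suppose for contradiction that $\phi : K \to H$ is a virtual endomorphism with trivial $\phi$-core. Following the Lie-algebra strategy of \cite{NS23}, consider the $\mathbb{Q}_p$-linear derivative $d\phi$, a self-embedding of $\mathfrak{sl}_1(D)$ viewed as a $\mathbb{Q}_p$-Lie algebra. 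Using the reduced trace pairing, the unique maximal order of $D$, and the $p$-adic valuation filtration on $D$, produce a non-zero closed $H$-normal sublattice of $L(K)$ preserved by $d\phi$; exponentiating yields a non-trivial element of the $\phi$-core, contradicting the hypothesis.

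\noindent\textbf{Main obstacle.} The anisotropic direction is the substantive step and the core extension beyond \cite{NS23}. Over $k = \mathbb{Q}_p$ the Lie algebra $\mathfrak{sl}_1(D)$ is $\mathbb{Q}_p$-simple and the rigidity arguments of \cite{NS23} apply directly; for general $p$-adic $k$ it is only $k$-simple, so $\mathrm{Res}_{k/\mathbb{Q}_p}\mathfrak{sl}_1(D)$ admits additional $\mathrm{Gal}(\bar{k}/\mathbb{Q}_p)$-semilinear endomorphisms that enlarge the pool of candidate $d\phi$. The crux will be to show that every such semilinear candidate either automatically preserves the $k$-structure---reducing to the setting of \cite{NS23}---or else fails the contractivity needed for the $\phi$-core to vanish. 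We expect this to rest on a careful interplay between the $k$-reduced norm on $D$, the valuation filtration of its maximal order, and a Galois-cohomological descent forcing $k$-linearity of $d\phi$.
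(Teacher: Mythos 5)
Your isotropic direction is essentially sound and close in spirit to the paper's argument: both exploit an \emph{unbounded} inner automorphism (you via the contracting/expanding action of $t=\lambda(\pi)$ on root subgroups, the paper via the action on the Bruhat--Tits building). The one step you should make explicit is why a non-central closed normal subgroup $N$ of $K$ is automatically \emph{open}: its Lie algebra is an ideal of the simple Lie algebra $\mathfrak{g}$, hence equals $\mathfrak{g}$ unless $N$ is finite, and a finite normal subgroup is central. Once $N$ is open it contains small non-trivial elements of $U^{-}(k)$, whose conjugates under $t^{n}$ eventually leave the compact group $H$; that is the precise form of your vague claim that elements of $N$ have ``trivial components along the $U^{-}$ directions,'' and it finishes this direction without needing the appeal to Zariski density.

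The anisotropic direction, however, has a genuine gap: as your own ``Main obstacle'' paragraph concedes, the crux --- ruling out the extra Galois-semilinear candidates for $d\phi$ when $k\neq\mathbb{Q}_p$ --- is only something you ``expect'' to be provable by a descent argument you do not carry out. Since this is exactly the half of the theorem that resolves Conjecture D of \cite{NS23}, the proposal as written does not establish the result. It is also worth noting that the Lie-algebra route is unnecessarily heavy here. The paper's argument is: by Pink's theorem every virtual endomorphism $\phi:K\to H$ of a compact open $H\subseteq SL(1,D)$ is the restriction of a genuine automorphism $\Phi$ of $SL(1,D)$ (composed with a field automorphism of $k$, which acts trivially on $SL(1,D)$); by Skolem--Noether, $PSL(1,D)$ has finite index in $\mathrm{Aut}_{k}(SL(1,D))$, so the latter is compact and contains $K$ and $\phi(K)$ as finite-index subgroups; any finite-index subgroup $N\subseteq K\cap\phi(K)$ that is normal in $\mathrm{Aut}_{k}(SL(1,D))$ then satisfies $\phi(N)=\Phi N\Phi^{-1}=N$, exhibiting a non-trivial $\phi$-invariant normal subgroup. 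The moral is that in the anisotropic case every automorphism is bounded because the whole group is compact, so no contraction is available and no virtual endomorphism can have trivial core; no analysis of $d\phi$ on $\mathfrak{sl}_{1}(D)$, and no semilinear descent, is needed. Until your descent/contractivity step is actually proved, you should either complete it or replace it by an argument of this bounded-automorphism type.
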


Note that the non-isotropic simple algebraic groups $\mathbf{G}$ over a $p$-adic field $k$ (i.e., those with $\text{rank}_k\mathbf{G} = 0$) are exactly $SL(1,D)$ as above. So our theorem gives the complete answer for them, as well as proving the converse.\newline

The paper is organized as follows: In \S2 we give some preliminaries on self-similar actions on rooted trees. In \S3 we prove the extension of Noseda and Snopce result to all anisotropic groups. Our proof will be more conceptual and much shorter then theirs, appealing to some standard results in the theory of division algebras and $p$-adic algebraic groups. This last theory will serve us in \S4 to prove the converse. It will be clear from the proof of both parts that the main difference between the anisotropic case and the isotropic case is the existence (in the isotropic case) of unbounded inner automorphisms.\newline

\textbf{Acknowledgments.} This work is a part of the first author's PhD thesis and the second author's MSc thesis at the Hebrew University.  For suggesting the above topic and for providing helpful guidance, suggestions and ideas both authors are deeply grateful to Alexander Lubotzky and Shahar Mozes. During the period of work on this paper both authors were supported by the European Research Council (ERC) under the European Union’s Horizon 2020 research and innovation programme (grant agreement No 882751), the first author was also supported by the ISF-Moked grant 2019/19.

\section{Preliminaries}
\label{pre}

\subsection{Group actions on rooted trees}

We follow the basic definitions and propositions about self-similar group actions on rooted tree as presented in  \cite{Ne05}. \newline 

Let $X$ be a finite alphabet, and $X^{\ast}$ the rooted tree defined by this alphabet. Its vertices are the finite words on the alphabet $X$ with the special root vertex being the empty word $\emptyset$. Two vertices $v$ and $w$ are connected by an edge if $w=vx$ for some letter $x\in X$. \newline

Let $g : X^{\ast} \to X^{\ast}$  be an endomorphism of the rooted tree  $X^{*}$. For every vertex $v\in X^{\ast}$, one has the associated rooted subtrees $vX^{\ast}$ and $g(v)X^{\ast}$ which are both naturally isomorphic to $X^{\ast}$. Identifying these subtrees with the tree $X^{\ast}$, the restriction $g|_{vX^{\ast}} : vX^{\ast}\to g(v)X^{\ast}$ defines a map $g|_{v}:X^{\ast}\to X^{\ast}$, which is called the restriction of $g$ to $v$ (see Figure \ref{fig~1}). It is uniquely determined
by the condition $ g(vw)=g(v)g|_v(w), \quad \forall w\in X^{\ast} $.

\begin{center}
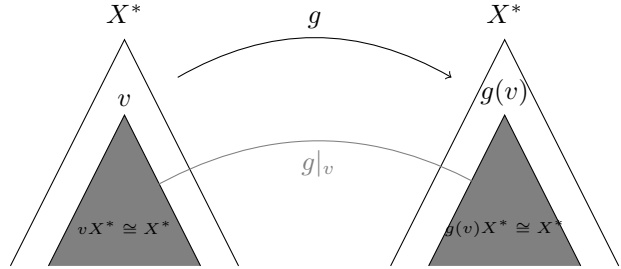
\begin{figure}[b]
\begin{tikzpicture}
    \draw (0,0) -- (1.5,3) -- (3,0);
    \filldraw[fill=gray,opacity=0.3] (0.5,0) -- (1.5,2) -- (2.5,0);
    \node[above] at (1.5,2) {$v$};
    \node[font=\tiny] at (1.5,0.5) {$vX^{\ast}\cong X^{*}$};
    \node[above] at (1.5,3.1) {$X^{\ast}$};
    \draw (5,0) -- (6.5,3) -- (8,0);
    \filldraw[fill=gray,opacity=0.3] (5.5,0) -- (6.5,2) -- (7.5,0);
    \node[above] at (6.5,2) {$g(v)$};
    \node[font=\tiny] at (6.5,0.5) {$g(v)X^{\ast}\cong X^{*}$};
    \node[above] at (6.5,3.1) {$X^{\ast}$};
    \draw[-to] (2.2,2.5) to[bend left] node[above] {$g$} (5.8,2.5);
    \draw[-to,color=gray] (1.8,1.0) to[bend left] node[below] {$g|_{v}$} (6.3,1.0);
\end{tikzpicture}
\caption{The restriction $g|_{v}$}\label{fig~1}
\end{figure}
\end{center}

These restrictions obviously satisfy
\begin{gather*}
    g|_{v_1v_2} = g|_{v_1}|_{v_2}; \\
    (g_1  \cdot g_2)|_v = g_{1}|_{g_2(v)} \cdot  g_{2}|_{v}.
\end{gather*}

\subsection{Self-similar actions and virtual endomorphisms}
A faithful action of a group G on $X^{\ast}$ is said to be \textit{self-similar} if for every $g \in G$ and every $x \in X$ there exist $h \in G$ and $y\in X$ such that $g(xw) = yh(w)$
for every $w \in X^{\ast}$. We denote self similar actions as pairs $(G,X)$ where $G$ is the group and $X$ is the set of alphabet such that G acts on $X^{\ast}$. Since the action is faithful, the pair $(h,y)$ is uniquely determined by the pair $(g,x)$, $y=g(x)$, $h=g|_x$. A group is called \textit{self-similar of index $d$} if it has a faithful self-similar action on a $d$-regular rooted tree which is transitive on the first level of the tree. \newline  
    
A virtual endomorphism $\varphi : G\dashrightarrow G$ is a homomorphism
 $\varphi : G_{0} \rightarrow G$, where $G_{0} \leq G$ is a subgroup of finite index.
The index of the virtual endomorphism is $[G : G_{0}]$. A subgroup $H \leq G$ is said to be $\varphi$-\textit{invariant} if $H \leq G_{0}$ and $\varphi(H)\subseteq H$. A virtual endomorphism is called \textit{simple} if there are no non-trivial normal $\varphi$-invariant subgroup. \newline

The next proposition is a reorganization of some ideas and propositions from \cite[chapter 2]{Ne05}.
\begin{Proposition}
    Let $G$ be a group and $d\geq1$ be an integer. Then $G$ is self-similar of index $d$ if and only if $G$ admits a simple virtual endomorphism of index $d$.
\end{Proposition}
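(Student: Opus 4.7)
The plan is to establish an explicit correspondence between self-similar actions and simple virtual endomorphisms: in one direction the virtual endomorphism is obtained as restriction to a fixed letter, and in the other the action on the tree is reconstructed recursively from the virtual endomorphism together with a choice of transversal.

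For the forward implication, suppose $(G,X)$ is self-similar with $|X|=d$. I would fix a letter $x_0\in X$, set $G_0:=\mathrm{Stab}_G(x_0)$ (of index $d$ by transitivity on the first level), and define $\varphi\colon G_0\to G$ by $\varphi(g):=g|_{x_0}$. The composition law $(g_1g_2)|_{x_0}=g_1|_{g_2(x_0)}\cdot g_2|_{x_0}$, combined with $g_2(x_0)=x_0$ for $g_2\in G_0$, makes $\varphi$ a homomorphism of index $d$. To prove simplicity I would take a normal $\varphi$-invariant subgroup $N$ and show by induction on word length that every $n\in N$ acts trivially on $X^{\ast}$, so faithfulness forces $N=\{1\}$. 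The main induction step is the claim that $n\in N$, $x\in X$ imply $n|_x\in N$: for $x=x_0$ this is just $\varphi$-invariance, and for general $x$ one picks $g\in G$ with $g(x)=x_0$, notes that $gng^{-1}\in N\subseteq G_0$, applies the composition formula to $(gng^{-1})|_{x_0}$ (which lies in $N$ by $\varphi$-invariance), and uses the level-one fact that $n$ fixes $X$ pointwise to simplify. One finds that $(gng^{-1})|_{x_0}$ is a conjugate of $n|_x$ in $G$, and normality of $N$ then puts $n|_x$ itself into $N$.

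For the converse, given a simple $\varphi\colon G_0\to G$ of index $d$, I would choose a left transversal $\{t_x\}_{x\in X}$ for $G_0$ in $G$ with $t_{x_0}=e$ for a distinguished letter $x_0$. For each $g\in G$ and $x\in X$ there are unique $y\in X$ and $h\in G_0$ with $gt_x=t_yh$; set $g(x):=y$ and $g|_x:=\varphi(h)$, and extend recursively by $g(xw):=g(x)\cdot g|_x(w)$. By construction this is self-similar and transitive on the first level. Its kernel $N$ is normal in $G$, contained in $G_0$ (since any $n\in N$ fixes $x_0$ and $t_{x_0}=e$), and $\varphi$-invariant (since $\varphi(n)=n|_{x_0}$ is again in the kernel). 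Simplicity of $\varphi$ then yields $N=\{1\}$, so the action is faithful.

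I expect the main obstacle to be verifying that the recursive formula in the converse direction actually defines a group action, not merely a set-theoretic operation on $X^{\ast}$. This is a cocycle-type check relating the transversal decomposition of $(g_1g_2)t_x$ to those of $g_2t_x$ and $g_1t_{g_2(x)}$, and it must be propagated down the tree using the homomorphism property of $\varphi$. A secondary delicate point is the forward-direction bootstrap showing $n|_x\in N$ for all $x$ (not just $x=x_0$), where normality of $N$ enters in an essential way; beyond these two points the argument is routine unwinding of the composition laws already recorded in the preliminaries.
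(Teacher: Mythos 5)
Your proposal is correct and follows essentially the same route as the paper: restriction at a fixed letter $x_0$ on its stabilizer for the forward direction, and reconstruction of the action from a coset transversal for the converse. If anything you are more careful than the paper at the two places it is terse --- the inductive step showing $n|_x\in N$ for every $x$ (via the conjugation identity $(gng^{-1})|_{x_0}=g|_x\,n|_x\,(g|_x)^{-1}$ for $g(x)=x_0$, which the paper elides), and the converse-direction faithfulness, where your direct observation that the kernel is normal, contained in $G_0$, and $\varphi$-invariant is cleaner than the paper's argument by contradiction.
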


\begin{proof} 
    Assume that $G$ is self-similar of index $d$, i.e. there is a finite alphabet $X$ of cardinality $d$, and a faithful, first level transitive, self-similar action of $G$ on the rooted tree $X^{\ast}$. Let $G_0$ be the stabilizer of $x_{0} \in X$ in $G$, and
    define $\varphi : G_0 \rightarrow G$ by $\varphi(g) := g|_{x_{0}}$. Then for $g_{1},g_{2}\in G_{0}$, $\varphi(g_{1}g_{2})=(g_{1}g_{2})|_{x_0}=g_{1}|_{g_{2}(x_{0})}g_{2}|_{x_{0}}=g_{1}|_{x_{0}}g_{2}|_{x_{0}}=\varphi(g_{1})\varphi(g_{2})$ so $\varphi$ is indeed a virtual endomorphism of $G$ of index  $[G:G_0]$. 
    
    Write $G=\bigsqcup h_{i}G_{0}$. If $h_{i}(x_{0})=h_{j}(x_{0})\in X$ for some $i,j$ then $(h^{-1}_jh_i)(x_{0})=x_{0}$ so $h^{-1}_jh_i\in G_0$. Then $h_i,h_j$ belong to the same left coset of $G_0$, which means there are at most $d$ cosets. On the other hand, the action is transitive, so every $x\in X$ can be written as $(h_i)(x_{0})=h_i(g_0(x_{0}))=(h_ig_0)(x_{0})$ for some $i$ and every $g_0\in G_0$. Thus there are at least d cosets and $[G:G_0]=d.$

    Now let's show that $\varphi$ is simple.
    The action on $X^{\ast}$ is transitive on the first level, so for every $x\in X$ we have $x_{0}=gx$ for some $g\in G$. Then $g^{-1}G_0g=G_x$ which means for every $N \triangleleft G$,  $N\subseteq G_0$ also $N\subseteq G_x.$ So $N$ acts trivially on every $x\in X$. Thus, if  $\varphi(N)\subseteq N$, N acts trivially on every $w\in X^{\ast}$, but the action is faithful so $N={1}.$ \newline

    On the other hand, let $\varphi :G_{0}\rightarrow G$ be a simple virtual endomorphism of index $d$ and let $X=\{0,1,...,d-1\}$. Choose representatives $h_0,h_1,...,h_{d-1}$ such that $G=\sqcup h_{i}G_{0}$. For every $g\in G$ and $i\in X$ there is a unique $j \in X$ such that $h^{-1}_{j}gh_{i} \in G_{0}$. Define $g(i)=j,g|_{i}=\varphi(h^{-1}_{j}gh_{i})$. For every $w\in X^{\ast}$, $g(iw)=g(i)g|_i(w)=j\varphi(h^{-1}_jgh_i)(w)$. This defines an action of $G$ on $X^{\ast}$. We have to show that this action is faithful.
    
    Let $N$ be the subgroup of all $g \in G$ such that $g(w)=w$ for every $w \in X^*$. Indeed, for $g_1,g_2\in N, g_1(g_2(w))=g_2(g_1(w))=w$ and $w=(gg^{-1})(w)=g(g^{-1}(w))=g^{-1}(w)$ so N is a subgroup. Also for every $g\in N, f\in G$ we get that $(f^{-1}gf)(w)=f^{-1}(g(f(w))=w$ so $N$ is normal in $G$. Assume now by contradiction that $N\neq {1}$. Let $i\in X$, by simplicity of $\varphi$ there exists $1\neq g\in N$ such that $\varphi(h_{i}^{-1}gh_{i})\notin N$, and thus we can find some $w\in X^{\ast}$ with $\varphi(h_{i}^{-1}gh_{i})(w)\neq w$.
    Then $g(iw)=g(i)\varphi(h^{-1}_{i}gh_{i})(w)=i\varphi(h^{-1}_{i}gh_{i})(w) \neq iw.$ We get a contradiction so $N={1}$ and the action is faithful. 
    
    It remains to show that the action is transitive on the first level. By definition, if $g(i)=j$ then $h^{-1}_jgh_i \in G_{0}$. Thus, $h^{-1}_{k}h_{k}h^{-1}_{j}gh_{i} \in G_{0}$ for every $k \in X$ and so $(h_{k}h^{-1}_{j}g)(i)=k$. The action is transitive on the first level as needed.
\end{proof}

We call a virtual endomorphism \textit{almost simple} if the only normal invariant subgroups are central. 

\begin{Corollary}
    Let $G$ be a residually finite group with finite center. Then $G$ is self-similar if and only if $G$ admits an almost simple virtual endomorphism.
\end{Corollary}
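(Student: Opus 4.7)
The plan is to reduce to the preceding proposition. The ``only if'' direction is immediate, since a simple virtual endomorphism is \emph{a fortiori} almost simple, and the proposition supplies one whenever $G$ is self-similar.

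For the converse, let $\varphi : G_0 \to G$ be an almost simple virtual endomorphism, and let $K$ be the largest normal $\varphi$-invariant subgroup of $G$ (this exists since the join of two normal $\varphi$-invariant subgroups is again one). By almost simplicity, $K \subseteq Z(G)$, so $K$ is finite. Tracing the construction in the proof of the proposition, one sees that $K$ is precisely the kernel of the $G$-action on $X^{\ast}$ produced by that construction applied to $\varphi$; thus $K$ is the only obstruction to faithfulness, and we need to arrange that it disappears.

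The key move is to shrink the domain of $\varphi$. Using residual finiteness together with the finiteness of $K$, intersect finitely many finite-index normal subgroups of $G$ (one avoiding each non-identity element of $K$) to obtain a single finite-index normal subgroup $N \triangleleft G$ with $N \cap K = \{1\}$. Set $G_1 := G_0 \cap N$ and $\tilde\varphi := \varphi|_{G_1} : G_1 \to G$; since $[G : G_1] < \infty$, this is a virtual endomorphism of $G$. Any normal $\tilde\varphi$-invariant subgroup $M$ of $G$ is, in particular, a normal $\varphi$-invariant subgroup of $G$ (as $G_1 \subseteq G_0$), forcing $M \subseteq K$ by the maximality of $K$; combined with $M \subseteq G_1 \subseteq N$, this yields $M = \{1\}$. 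Hence $\tilde\varphi$ is simple, and the proposition produces a faithful self-similar action of $G$ of index $[G : G_1]$.

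No step is genuinely delicate: residual finiteness and the finiteness of $Z(G)$ are exactly what is needed to absorb the unavoidable central kernel $K$ by passing to a sub-endomorphism of larger, but still finite, index.
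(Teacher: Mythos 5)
Your proposal is correct and follows essentially the same route as the paper: both reduce to the preceding proposition by restricting $\varphi$ to a finite-index subgroup, obtained via residual finiteness, that meets the (finite, central) obstruction trivially, so that any normal invariant subgroup of the restriction is forced to be trivial. Your detour through the maximal normal $\varphi$-invariant subgroup $K$ is sound but not needed -- the paper simply chooses $G_1 \subseteq G_0$ of finite index with $G_1 \cap Z(G) = \{1\}$, which already suffices since almost simplicity places every normal invariant subgroup inside the center.
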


\begin{proof}
    Assume that $G$ admits an almost simple virtual endomorphism $\varphi: G_{0} \to  G$. Then, as $G$ is residually finite with finite center, there exists a finite index subgroup $G_{1}\subseteq G_{0}$ which intersect the center trivially. Thus $\varphi|_{G_{1}}:G_{1}\to G$ defines a simple virtual endomorphism, as needed.
\end{proof}

\begin{Remark}
    As all groups we consider are residually finite with finite center, in order to prove the main theorem \ref{Mai~The}, it is enough to determine weather they admit an almost simple virtual endomorphism.
\end{Remark}

\section{anisotropic case}

In this section we prove one direction of the main theorem \ref{Mai~The}, i.e. let $k$ be a $p$-adic field and $\mathbf{G}$ a simply connected, absolutely almost simple anisotropic linear algebraic $k$ group and $H \subseteq \mathbf{G}(k)$ a compact open subgroup. Then $H$ has no almost simple virtual endomorphism. \newline

By Tits classification \cite[\S 3.3.3]{Ti66}, there exists a finite dimensional central division algebra $D$ over $k$ such that $\mathbf{G}(k)\cong SL(1,D)$, the group of norm 1 elements of $D$. This norm is defined as follows: Let $K$ be a splitting field for $D$, i.e. there exists a $k$-isomorphism $\varphi:D \otimes_{k} K \simeq M_{d}(K)$ for $d:=\sqrt{\dim D}=\deg D$. Then the \textit{reduced norm} of $D$ is $\text{Nrd}_{D/k}:D\rightarrow k$,  $\text{Nrd}_{D/k}(a):= \det(\varphi(a \otimes 1))$. It is independent upon the choice of splitting field. Then $SL(1,D)=\{ a\in D: \, \text{Nrd}_{D/k}(a)=1\}$. \newline
Write $Aut_{k}(SL(1,D))$ for the group of $k$-automorphsim of $SL(1,D)$, the quotient $PSL(1,D)$ embeds in this group of automorphsims. \newline

\begin{Proposition} \label{ani~fin~ind}
     $[Aut_{k}(SL(1,D)):PSL(1,D)]<\infty$
\end{Proposition}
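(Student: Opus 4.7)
The plan is to factor the index through the tower
$$PSL(1,D) \;\subseteq\; PGL(1,D) \;\subseteq\; \mathrm{Aut}_{k}(SL(1,D)),$$
and show that each step has finite index. Here the middle term is realised as the image of the conjugation map $D^{\times}/k^{\times} \hookrightarrow \mathrm{Aut}_{k}(SL(1,D))$. Injectivity of that map is the one preparatory check: since $SL(1,D)$ is Zariski dense in the algebraic group $\mathbf{SL}(1,D)$, any $a \in D^{\times}$ centralising $SL(1,D)$ centralises all of $\mathbf{SL}(1,D)$, hence lies in $Z(D^{\times}) = k^{\times}$.

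For the upper step I would appeal to the general structure of $k$-automorphisms of a simply connected absolutely almost simple $k$-group $\mathbf{G}$: the inner automorphisms form exactly the adjoint group $\mathbf{G}^{\mathrm{ad}}(k) = PGL(1,D)$, and Galois cohomology of the short exact sequence of group schemes $1 \to \mathbf{G}^{\mathrm{ad}} \to \underline{\mathrm{Aut}}(\mathbf{G}) \to \underline{\mathrm{Out}}(\mathbf{G}) \to 1$ embeds the outer automorphism quotient $\mathrm{Aut}_{k}(\mathbf{G})/\mathbf{G}^{\mathrm{ad}}(k)$ into the finite group $\underline{\mathrm{Out}}(\mathbf{G})(k)$ of $\mathrm{Gal}(\bar{k}/k)$-invariant Dynkin diagram symmetries of $\mathbf{G}_{\bar{k}} \cong \mathbf{SL}_{d}$ (where $d = \deg D$), a group of order at most $2$.

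For the lower step I would use the reduced norm. Over a non-archimedean local field $k$ the map $\mathrm{Nrd}: D^{\times} \to k^{\times}$ is surjective (the classical description of local division algebras), so the sequence $1 \to SL(1,D) \to D^{\times} \to k^{\times} \to 1$ is exact. On $k^{\times} \subseteq D^{\times}$ the reduced norm is the $d$-th power map, and chasing through the quotient by $k^{\times}$ yields
$$PGL(1,D)/PSL(1,D) \;\cong\; k^{\times}/(k^{\times})^{d},$$
which is finite because $k^{\times} \cong \mathbb{Z} \times \mu(k) \times \mathbb{Z}_{p}^{[k:\mathbb{Q}_{p}]}$ has finite $n$-th power quotient for every $n$. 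Combining the two steps gives the proposition. The only serious obstacle I anticipate is careful bookkeeping in the identification $PGL(1,D)/PSL(1,D) \cong k^{\times}/(k^{\times})^{d}$; the two ``abstract'' inputs (surjectivity of the reduced norm, and finiteness of $\mathrm{Out}$ of a simple algebraic group) are standard and can be cited.
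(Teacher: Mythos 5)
Your proof is correct, and it arrives at the same bound $2\cdot|k^{\times}/(k^{\times})^{d}|$ via the same two-step decomposition (inner automorphisms coming from $D^{\times}$, then $D^{\times}$ versus $SL(1,D)$ measured by the reduced norm), but the two key lemmas are justified by genuinely different means. For the upper step the paper works by hand: it extends a $k$-automorphism to $SL_{d}(\overline{\mathbb{Q}_{p}})$, writes it as $C_{g}\cdot s^{\epsilon}\cdot\sigma$ using the classical description of automorphisms of $SL_{d}$ over an algebraically closed field, and then applies Skolem--Noether (to $D$ itself when $\epsilon=0$, and to an isomorphism $D\to D^{op}$ when $\epsilon=1$) to move $g$ into $D^{\times}$. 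Your appeal to the exact sequence $1\to\mathbf{G}^{\mathrm{ad}}\to\underline{\mathrm{Aut}}(\mathbf{G})\to\underline{\mathrm{Out}}(\mathbf{G})\to 1$ and left-exactness on $k$-points packages exactly that computation --- together with the Hilbert 90 identification $\mathbf{PGL}_{1}(D)(k)=D^{\times}/k^{\times}$, which is the cohomological avatar of Skolem--Noether --- and is cleaner and not specific to type $A$, at the cost of invoking heavier machinery. For the lower step the paper does not need surjectivity of the reduced norm: it only observes that conjugation by $g$ and by $g/\lambda$ coincide for $\lambda\in k^{\times}$, so that $\mathrm{Nrd}$ induces an \emph{injection} of $PGL(1,D)/PSL(1,D)$ into $k^{\times}/(k^{\times})^{d}$, and an injection already gives finiteness; your use of $\mathrm{Nrd}(D^{\times})=k^{\times}$ upgrades this to an isomorphism, which is true but not needed. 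Both arguments conclude by quoting finiteness of $k^{\times}/(k^{\times})^{d}$ for a $p$-adic field.
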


\begin{proof}
    One has that $D\hookrightarrow D\otimes \overline{\mathbb{Q}_{p}} \cong M_{d}(\overline{\mathbb{Q}_{p}})$ and $\quad SL(1,D)\otimes \overline{\mathbb{Q}_{p}}\cong SL_{d}(\overline{\mathbb{Q}_{p}})$ where $d$ is the degree of $D$. If $\varphi$ is a $k$-automorphism of $PSL(1,D)$ then $\varphi\otimes 1$ is a $k$-automorphism of $SL(d,\overline{\mathbb{Q}_{p}})$, hence can be written as $C_{g}\cdot s^{\epsilon} \cdot \sigma$, where $C_{g}$ is conjugation by some $g\in SL(d,\overline{\mathbb{Q}_{p}})$, $s$ is the non-trivial Dynkin automorphism of $SL(d)$ ($s(X)=(X^{t})^{-1}$), $\epsilon = 0,1$ and $\sigma\in Aut(\overline{\mathbb{Q}_{p}}/k)$. Note that $\sigma$ acts trivially on $k$, and hence acts trivially on $SL(1,D)$. \newline
    If $\epsilon = 0$, $C_{g}$ defines an automorphism of $SL(1,D)$. As it is a conjugation automorphism it also defines an automorphism (of algebras) of $D$. By the Skolem-Noether theorem \cite[Theorem 3.14]{FD93} it is inner, i.e. we can assume $g\in D^{\times}$. If $\epsilon =1$, $C_{g} \cdot T$ ($T$ is the transpose map) preserves $SL(1,D)$ and thus defines an isomorphism (of algebras) $D \to D^{op}$ (where $D^{op}$ is the opposite ring). Again by the Skolem-Noether theorem \cite[Theorem 3.14]{FD93}, we can assume that $g\in D^{\times}$. Thus, we can write $\varphi = C_{g}\cdot s^{\epsilon}$, where $g\in D^{\times}$, $s$ is the non-trivial Dynkin automorphism of $SL(d)$ and $\epsilon = 0,1$. \newline 
    Conjugating by $g\in D^{\times}$ is the same as conjugating by $g/\sqrt[d]{\text{Nrd}(g)}$ and thus, up to $d$-powers in $k^{\times}$ one can assume that $g\in SL(1,D)$. Hence
    \[
        [Aut_{k}(SL(1,D)):PSL(1,D)] \leq 2 \cdot |k^{\times}/(k^{\times})^{d}| < \infty
    \]
    As the subgroup $(k^{\times})^{d}$ of $d$ powers in $k^{\times}$ is of finite index in $k^{\times}$.\cite[Corollary II.5.8]{Ne13}.
\end{proof}

\begin{proof}[Proof of the first direction of the main theorem]
    Assume that $\varphi$ is an almost simple virtual endomorphism of $H$, write $H_{0}$ for the domain of $\varphi$. By restricting $\varphi$ if necessary, we can assume that $\varphi$ is injective and that $H_{0}$ is centerless. Moreover, as $H_{0}$ is finitely generated virtually pro-$p$ group, $\varphi$ is continuous \cite[Chapter I, \S 4.2, Exercise 6]{Se79}, and thus $\varphi$ defines an homeomorphism between $H_{0}$ and $\varphi(H_{0})$. In particular the dimension of $\varphi(H_{0})$ as a $p$-adic analytic group is equal to that of $H_{0}$ and $H$, and thus both $H_{0}$ and $\varphi(H_{0})$ are open compact subgroup of $SL(1,D)$. \newline
    By Pink's theorem \cite[Corollary 0.3]{Pi98} there exists a $k$-isomorphism $\Phi:SL(1,D)\to SL(1,D)$ and a field isomorphism $\sigma$ of $k$ so that
    $\varphi$ is the restriction of $\Phi\circ \sigma$ to $H_{0}$. By the previous Proposition (\ref{ani~fin~ind}), $H_{0}$ and $\varphi(H_{0})$ can be embedded as subgroups of finite index in $Aut(SL(1,D))$. Then there exists a normal (in $Aut(SL(1,D))$) finite index subgroup $N\subseteq H_{0}\cap \varphi(H_{0})$. Then $N = \varphi N \varphi^{-1}= \varphi(N)$, and thus $\varphi$ is not almost simple.
\end{proof}

\section{isotropic case}

For every simple algebraic group $\mathbf{G}$ over a local field $k$ there is a simplicial chamber complex $X_{G}$ called a the 'Bruhat-Tits building associated to $\mathbf{G}(k)$' (see for example \cite{Hi75},\cite{BT67}), this building is the non-Archimedean analog of the symmetric space associated with a real simple algebraic group. \newline
For our application we will need only the most basic properties of this building. In short, the simplicial structure of $X_{G}$ can be given as follows. Let $B\subseteq \mathbf{G}(k)$ be the normalizer of a Sylow pro-p subgroup of $G_{K}$ (note that all are conjugates), such $B$ is called an Iwahori subgroup. If a compact subgroup $P\subseteq \mathbf{G}(k)$ contains an Iwahori subgroup it is called parahoric. The vertices of the building $X_{G}$ are the maximal proper parahoric subgroups and a collection of vertices $\{P_{0},...,P_{s}\}$ defines an $s$-simplex if $\cap_{i=0}^{s}P_{i}$ is also a parahoric subgroup. The conjugation action of $\mathbf{G}(k)$ on $X_{G}$ is simplicial and the stabilizers of simplices are proper parahoric subgroups. The building $X_{G}$ (more precisely, its geometric realization) is a contractible space of dimension $\text{rank}_{k}\mathbf{G}$.\newline

Every automorphism $\varphi:\mathbf{G}(k)\to \mathbf{G}(k)$ defines an automorphism of the building $X_{G}$, $\varphi$ is called \textit{bounded} if there is a point $x\in X_{G}$ so that $\{\varphi^{n}x\}\subseteq X_{G}$ is bounded (note that in this case, for every bounded subset $Y\subseteq X_{G}$, $\{\varphi^{n}y:\, n\in \mathbb{Z},y\in Y\}$ is also bounded). It is called \textit{unbounded} if it is not bounded, in which case $\{\varphi^{n}(y)\}_{n=0}^{\infty}$ is unbounded for every point $y$ of $X_{G}$.\newline
If $\mathbf{G}(k)$ is not compact (i.e. $\text{rank}_{k}(G)\geq 1$), there are unbounded automorphisms of $X_{G}$. In fact, the conjugation action by every element $g\in \mathbf{G}(k)$, with $\overline{\langle g \rangle}$ not compact defines such an automorphism. \newline

Let $H\subseteq \mathbf{G}(k)$, be a compact open subgroup. Note that as in the previous section, by Pink's theorem \cite[Corollary 0.3]{Pi98} every virtual endomorphism of $H$ is the restriction of a genuine automorphsim of $\mathbf{G}(k)$, and on the other hand every automorphism of $\mathbf{G}(k)$ commensurate $H$, so there is a correspondence between virtual endomorphisms of $H$ and automorphisms of $\mathbf{G}(k)$. Hence, the following proposition will imply the second direction of the main theorem \ref{Mai~The}.

\begin{Proposition}
    Let $H\subseteq \mathbf{G}(k)$ be a compact open subgroup. Then every unbounded automorphism $\varphi:\mathbf{G}(k)\to \mathbf{G}(k)$ defines an almost simple virtual endomorphism of $H$.
\end{Proposition}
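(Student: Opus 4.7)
The plan is to set $H_0 := H \cap \varphi^{-1}(H)$; since $\varphi$ commensurates $H$, $H_0$ has finite index in $H$ and $\varphi(H_0) \subseteq H$, so $\tilde\varphi := \varphi|_{H_0}\colon H_0 \to H$ is a virtual endomorphism of $H$. I then need to check almost simplicity: if $N \triangleleft H$ is a $\tilde\varphi$-invariant subgroup (so $N \subseteq H_0$ and $\varphi(N) \subseteq N$), I must show that $N$ is central in $H$.

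First I would establish a dichotomy: either $N$ is central or $N$ is Zariski dense in $\mathbf{G}$. Since $H$ is open in $\mathbf{G}(k)$ and $\mathbf{G}$ is connected, $H$ is Zariski dense; as $H$ normalizes $N$, it also normalizes the Zariski closure $\overline{N}$. Thus the algebraic normalizer $N_{\mathbf{G}}(\overline{N})$ is a Zariski closed subgroup containing the Zariski-dense $H$, hence equals $\mathbf{G}$, so $\overline{N}$ is normal in $\mathbf{G}$. By almost simplicity, either $\overline{N}\subseteq Z(\mathbf{G})$, in which case $N$ lies in $Z(\mathbf{G}(k))\cap H = Z(H)$, or $\overline{N}=\mathbf{G}$.

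In the Zariski-dense case I would derive a contradiction using the building $X_G$. The fixed-point set $F_N := X_G^N$ is non-empty (the compact $N$ fixes a point of the CAT(0)-space $X_G$), closed, and convex. If $F_N$ were unbounded then, being closed, convex, and unbounded in the locally finite building, it would contain a geodesic ray, whose endpoint $\xi \in \partial X_G$ would be fixed by $N$. But the stabilizer in $\mathbf{G}(k)$ of an ideal point of $X_G$ is the group of $k$-points of a proper parabolic $\mathbf{P}\subsetneq\mathbf{G}$, which is Zariski closed, contradicting the Zariski density of $N$. So $F_N$ must be bounded.

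On the other hand, the relation $\varphi(N)\subseteq N$ yields $N\subseteq \varphi^{-n}(N)$ for every $n\geq 0$; since $\varphi^{-n}(N)$ fixes $\varphi^{-n}(F_N)$, so does $N$, giving $\varphi^{-n}(F_N)\subseteq F_N$ for all $n\geq 0$. Since $\varphi$ is unbounded so is $\varphi^{-1}$ (boundedness being symmetric under $\varphi\leftrightarrow\varphi^{-1}$), and $\varphi^{-n}(x)$ leaves every bounded subset of $X_G$ for any $x\in F_N$; hence $F_N$ is unbounded, contradicting the preceding step. Thus $N$ is central and $\tilde\varphi$ is almost simple. The main technical input is the identification of stabilizers of ideal points of the Bruhat-Tits building as $k$-points of proper parabolics; the conceptual heart is that Zariski density forces $F_N$ to be bounded, while $\varphi$-invariance under an unbounded $\varphi$ forces it to be unbounded.
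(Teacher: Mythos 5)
Your proof is correct and arrives at the same contradiction as the paper -- the fixed-point set in the Bruhat--Tits building of a non-central normal $\varphi$-invariant subgroup $N$ would have to be both bounded and unbounded -- but you implement both halves of that contradiction differently. For boundedness, the paper shows by a Lie-algebra argument that a non-central normal subgroup of $H_0$ is open (of finite index), places $H_0$ in a vertex stabilizer, and then simply asserts that the fixed-point set of a finite-index subgroup of $St(v_0)$ is bounded; you instead pass to the Zariski closure, use almost simplicity of $\mathbf{G}$ to conclude $\overline{N}=\mathbf{G}$, and exclude an unbounded fixed-point set by extracting a geodesic ray and noting that the stabilizer of its ideal endpoint is the $k$-points of a proper parabolic, hence not Zariski dense. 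This actually supplies a proof of the boundedness step that the paper leaves implicit. For unboundedness, the paper first upgrades $\varphi(N)\subseteq N$ to $\varphi(N)=N$ via a Haar-measure argument (every automorphism of $\mathbf{G}(k)$ preserves Haar measure because $Aut(\mathbf{G}(k))$ has finite abelianization) and then pushes the fixed vertex forward along $\varphi^{n}$; you avoid that step entirely by pulling back: $\varphi(N)\subseteq N$ alone gives $\varphi^{-n}(F_N)\subseteq F_N$, and the unboundedness of $\varphi^{-1}$ (equivalent to that of $\varphi$, since it acts by isometries, so $d(x,\varphi^{-n}x)=d(\varphi^{n}x,x)$) finishes the argument. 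Your backward-propagation device is the more economical of the two, since it needs neither the openness of $N$ nor the measure-preservation argument; the cost is that you must invoke the identification of stabilizers of ideal points with proper parabolics, a heavier piece of building theory than anything the paper uses explicitly.
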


\begin{proof}
    Let $\varphi:\mathbf{G}(k) \to \mathbf{G}(k)$ be an automorphism, and by abuse of notation, say $\varphi=\varphi|_{H_{0}}:H_{0}\to H$ is a virtual endomorphism of $H$ defined by $\varphi$. \newline
    
    Assume that $\varphi$ is unbounded. Let $N\subseteq H_{0}$ be a non-central normal subgroup. It is well known that $N$ is of finite index in $H_{0}$, we give here a sketch of the argument. Since $H_{0}$ is open, its Lie algebra is equal to $\mathfrak{g}$ the $\mathbb{Q}_{p}$ Lie algebra of $G$, which is a simple Lie algebra. Now as $N$ is normal, its Lie algebra $\mathfrak{n}$ is an ideal of $\mathfrak{g}$. As the later is simple, either $\mathfrak{n}=\mathfrak{g}$ in which case $N$ is open in $G$ and hence of finite index in $H$, or $\mathfrak{n}=0$, in which case $N$ is finite. If $N$ is finite and normal, then its centralizer is open and so is Zariski dense, this implies that $N$ is indeed central. \newline
    As $H_{0}$ is a compact subgroup, $H_{0}\subseteq St(v_{0})$ for some vertex $v_{0}$ \cite[\S 14.7]{Ga97}. Assume that $N$ is $\varphi$-invariant (i.e. $\varphi(N)\subseteq N$). By the openness of $N$ it must be that $\varphi(N)=N$. Indeed, let $\mu$ be some Haar measure on $\mathbf{G}(k)$, the group $Aut(\mathbf{G}(k))$ acts on this measure by scalar multiplication, yielding a homomorphism $Aut(\mathbf{G}(k))\to \mathbb{R}_{>0}$, as the first group has finite abelianization and the second group is torsion free this map must be the trivial map, in particular $\varphi$ preserves the measure $\mu$. Thus, both $\varphi(N)\subseteq N$ are open with the same (non-zero) measure, and the inclusion must be an equality. This implies that if $v$ is fixed by $N$, then $N$ also fixes $\varphi(v)$, and by induction it fixes $\varphi^{n}(v)$ for every $n$. \newline
    The group $N\subseteq St(v_{0})$ is of finite index, and so its fixed point set in $X_{G}$ is bounded. This yields a contradiction, as $\{\varphi^{n}(v_{0})\}$ is an unbounded subset of the building. 
\end{proof}

In order to illuminate the virtual endomorphism produced in this fashion, we give a detailed geometric example for the case $H=SL(n,\mathbb{Z}_{p})$. Write $\mathbf{G}=SL(n)$, a concrete structure for the associated Bruhat-Tits building $X:=X_{G}$ of $SL(n,\mathbb{Q}_{p})$ can be given as follows (following \cite{Ga97} and \cite{PR93}). The vertices of $X$ are homothety equivalent classes of $\mathbb{Z}_{p}$-lattices in $\mathbb{Q}_{p}^{n}$. Define an incident relation $[L] = [M]$ if there are $L'\in [L]$, $M'\in [M]$ with $L'\subseteq M'$ and on the $\mathbb{Z}_{p}$-module $L'/M'$ one has $p\cdot L'/M'=0$ (so the quotient has the structure of a vector space over $\mathbb{F}_{p}$). It turns out that if $[L],[M]$ are incident, then any two representatives $L,M$ have the property that either $L\subseteq M$ or $L\supseteq M$. The maximal simplices (called 'chambers') of the simplicial complex defined by this incident geometry are in bijection with ascending chains of lattices
\[
... \subseteq L_{-1} \subseteq L_{0} \subseteq ... \subseteq L_{n-1} \subseteq L_{n} \subseteq ...
\]
with periodicity $L_{i+n}=pL_{i}$ for all indices $i$ and where the quotients $L_{i+1}/L_{i}$ are all one-dimensional $\mathbb{F}_{p}$ vector spaces. \newline
Let $\{e_{1},...,e_{n}\}$ be the standard basis for $\mathbb{Q}_{p}^{n}$, and consider the $\mathbb{Z}_{p}$-lattices $\Lambda_{i}$ with bases $e_{1},...,e_{n-i},pe_{n-i+1},...,pe_{n}$ for $0 \leq i \leq n-1$, their homothety classes correspond to a maximal simplex (chamber) $C$ in the building with stabilizer the Iwahori subgroup
\[
B = \{ x=(x_{ij})\in SL(n,\mathbb{Z}_{p}): \, x_{ij} \equiv 0 \mod p \text{ for } i>j\}
\]
which is the normalizer of the Sylow pro-p subgroup of matrices $x=(x_{ij})\in SL(n,\mathbb{Z}_{p})$ for which $x_{ii}\equiv 1 \mod p$ and $x_{ij}\equiv 0\mod p$ for all $1\leq i<j\leq n$. Further, consider the subcomplex $A$ consisting of all simplices $\sigma$ with vertices $[L]$ which are homothety classes of lattices with a representative $L$ expressible as $L=L_{1}+...+L_{n}$ where $L_{i}$ is a $\mathbb{Z}_{p}$-lattice in the line $\mathbb{Q}_{p}e_{i}$ (this subcomplex is called an 'apartment' of the building). As a geometric space, this subcomplex $A$ is isomorphic $\mathbb{R}^{n-1}$ with simplicial structure resulting by the cut outs of a certain set of hyperplanes. The diagonal torus $S$ of $SL(n,\mathbb{Q}_{p})$ action preserves $A$ and acts on $A \cong \mathbb{R}^{n-1}$ by translations which fix the set of these hyperplanes. Let $s=\text{diag}(s_{1},...,s_{n})\in S$ be such that $\text{val}_{p}(s_{i+1}/s_{i})$ are all distinct and non-zero. Then the (infinite) line going through $[\Lambda_{0}]=[\mathbb{Z}_{p}(e_{1}\oplus \cdots \oplus e_{n})]$ and $s.[\Lambda_{0}]$ does not lie in any of the hyperplanes, this implies that we can find a point $x$ close to $[\Lambda_{0}]$ in the interior of the chamber $C$ so the line going through $x$ and $s.x$ will only intersect chambers of $A$ and the co-dimension 1 faces and so the convex hull of all chambers that see this line (convex in the sense of chamber complexes) is the whole apartment $A\cong \mathbb{R}^{n-1}$, see Figure \ref{fig~2}. (See also \cite{Mo95}).

Let $\varphi$ be conjugation by $s$, and $H_{0}$ be the principle congruence subgroup of level $\max\{\text{val}_{p}(s_{j}/s_{i}):\, 1\leq i,j \leq n \}$, so $\varphi(H_{0})\subseteq SL(n,\mathbb{Z}_{p})$, and $\varphi:H_{0}\to SL(n,\mathbb{Z}_{p})$ is a virtual endomorphism. Note that $H_{0}\subseteq B$ and if $N\subseteq H_{0}$ is a non-central normal subgroup, then it fixes the chamber $C$ pointwise. By the above analysis of the action of $s$ on the apartment $A$ we see that $N$ must fix $A$ pointwise. Thus $N$ must be contained in $T$, the diagonal torus of $SL(n,\mathbb{Z}_{p})$. Such a subgroup cannot be normal in $H_{0}$, either by noticing it is not of finite index, or more concretely, it is not preserved under conjugation of   (small) unipotent elements in $H_{0}$.

\pgfmathsetmacro{\cols}{12}
\pgfmathsetmacro{\rows}{8}
\pgfmathsetmacro{\slant}{cot(60)}
\pgfmathsetmacro{\height}{0.5 * \rows * tan(60)}

\begin{figure}[H]
\begin{tikzpicture}
    \clip       (0, 0) rectangle (\cols, \height-0.5);

    \pgfmathsetmacro{\from}{-2 *\cols}
    \pgfmathsetmacro{\to}{2 * \cols}
    \foreach\i in {\from, ..., \to} {
        \draw[xslant=\slant]  (\i, 0) -- (\i, \height);
        \draw[xslant=-\slant] (\i, 0) -- (\i, \height);
    }

    \foreach\j in {0, ..., \rows} {
        \pgfmathsetmacro{\y}{0.5 * \j * tan(60)}
        \draw (0, \y) -- (\cols, \y);
    }
    \draw[blue,shorten >=-20cm,shorten <=-10cm] (1.333337,1.333337)  -- (3.333337,2.333337) ;

\end{tikzpicture}
\caption{The apartment $A$ for $\mathbf{G}=SL(3)$ and the line through $x$ and $s.x$}\label{fig~2}
\end{figure}
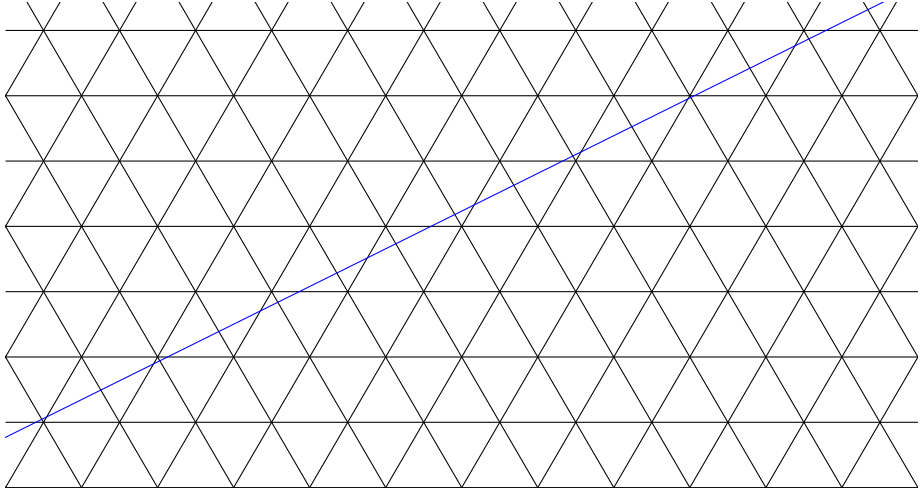

\bibliographystyle{acm}
\bibliography{References}

\nocite{*}

\end{document}